\numberwithin{equation}{section}
\numberwithin{figure}{section}
\theoremstyle{plain}
\newtheorem{thm}{\protect\theoremname}
  \theoremstyle{remark}
  \newtheorem*{rem*}{\protect\remarkname}
  \theoremstyle{plain}
  \newtheorem{lem}[thm]{\protect\lemmaname}
  \providecommand{\lemmaname}{Lemma}
  \providecommand{\remarkname}{Remark}
\providecommand{\theoremname}{Theorem}
\begin{document}

\title{Convergence bound in total variation for an image restoration model}

\author{Oliver Jovanovski\\
}

\address{Department of Mathematics and Statistics, 4700 Keele Street, York
University, M3J 1P3, Canada}

\email{olijovan@mathstat.yorku.ca\\
\emph{Telephone:} (416) 736-2100 Extension 40616 }
\begin{abstract}
We consider a stochastic image restoration model proposed by A. Gibbs
(2004), and give an upper bound on the time it takes for a Markov
chain defined by this model to be $\epsilon$- close in total variation
to equilibrium. We use Gibbs' result for convergence in the Wasserstein
metric to arrive at our result. Our bound for the time to equilibrium
of similar order to that of Gibbs.
\end{abstract}

\keywords{Markov Chain, Gibbs Sampler, MCMC, Image Restoration}

\maketitle

\section{Introduction}

A.L. Gibbs \cite{key-1} introduced a stochastic image restoration
model for an $N$ pixel \foreignlanguage{canadian}{greyscale} image
$x=\left\{ x_{i}\right\} _{i=1}^{N}$. More specifically, in this
model each pixel $x_{i}$ corresponds to a real value in $\left[0,1\right]$,
where a black pixel is represented by $0$ and a white pixel is represented
by the value $1$. It is assumed that in the real-world space of such
images, each pixel tends to be like its nearest \foreignlanguage{canadian}{neighbours}
(in the absence of any evidence otherwise). This assumption is expressed
in the prior probability density of the image , which is given by
\begin{equation}
\pi_{\gamma}\left(x\right)\propto exp\left\{ -\sum_{\left\langle i,j\right\rangle }\frac{1}{2}\left[\gamma\left(x_{i}-x_{j}\right)\right]^{2}\right\} \label{eq:prior}
\end{equation}
on the state space $\left[0,1\right]^{N}$, and is equal to $0$ elsewhere.
The sum in (\ref{eq:prior}) is over all pairs of pixels that are
considered to be \foreignlanguage{canadian}{neighbours}, and the parameter
$\gamma$ represents the strength of the assumption that \foreignlanguage{canadian}{neighbouring}
pixels are similar. Here images are assumed to have an underlying
graph structure. The familiar 2-dimensional digital image is a special
case, where \foreignlanguage{canadian}{usually} one might assume that
the \foreignlanguage{canadian}{neighbours} of a pixel $x_{i}$ in
the interior of the image (i.e. $x_{i}$ not on the boundary of the
image) are the 4 or 8 pixels surrounding $x_{i}$, depending on whether
or not we decide to consider the 4 pixels diagonal to $x_{i}$. 

The actual observed image $y=\left\{ y_{i}\right\} _{i=1}^{N}$ is
assumed to be the result of the original image subject to distortion
by random noise, with every pixel modified independently through the
addition of a $Normal\left(0,\sigma^{2}\right)$ random variable (hence
$y_{i}\in\mathbb{R}$). The resulting posterior probability density
for the original image is given by 
\begin{equation}
\pi_{posterior}\left(x\left|y\right.\right)\propto exp\left\{ -\sum_{i=1}^{N}\frac{1}{2\sigma^{2}}\left(x_{i}-y_{i}\right)^{2}-\sum_{\left\langle i,j\right\rangle }\frac{1}{2}\left[\gamma\left(x_{i}-x_{j}\right)\right]^{2}\right\} \label{eq:posterior}
\end{equation}
supported on $\left[0,1\right]$.

Samples from (\ref{eq:posterior}) can be approximately obtained by
means of a Gibbs sampler. In this instance, the algorithm works as
follows: at every iteration the sampler \foreignlanguage{canadian}{chooses}
a site $i$ uniformly at random, and replaces the value $x_{i}$ at
this location according to the full conditional density at that site.
This density is given by 
\begin{eqnarray}
\pi_{FC}\left(x_{i}\left|y,x_{k\neq i}\right.\right) & \propto & exp\left\{ \frac{\left(\sigma^{-2}+n_{i}\gamma^{2}\right)}{2}\right.\label{eq:pifc}\\
 &  & \left.\cdot\left[x_{i}-\left(\sigma^{-2}+n_{i}\gamma^{2}\right)^{-1}\left(\sigma^{-2}y_{i}+\gamma^{2}\sum_{j\sim i}x_{j}\right)\right]^{2}\right\} \nonumber 
\end{eqnarray}
on $\left[0,1\right]$ and $0$ elsewhere. Here $n_{i}$ is the number
of \foreignlanguage{canadian}{neighbours} the $i^{th}$ pixel has,
and $j\sim i$ indicates that the $j^{th}$ pixel is one of them.
It follows that (\ref{eq:pifc}) is a restriction of a\\
$Normal\left(\left(\sigma^{-2}+n_{i}\gamma^{2}\right)^{-1}\left(\sigma^{-2}y_{i}+\gamma^{2}\sum_{j\sim i}x_{j}\right),\left(\sigma^{-2}+n_{i}\gamma^{2}\right)^{-1}\right)$
distribution to the set $\left[0,1\right]$. 

The bound on the rate of convergence to equilibrium given in \cite{key-1}
is stated in terms of the Wasserstein metric $d_{W}$. This is defined
as follows: if $\mu_{1}$ and $\mu_{2}$ are two probability measures
on the same state space which is endowed with some metric $d$, then
\[
d_{W}\left(\mu_{1},\mu_{2}\right):=inf\mathbb{E}\left[d\left(\xi_{1},\xi_{2}\right)\right]
\]
where the infimum is taken over all joint distributions $\left(\xi_{1},\xi_{2}\right)$
such that $\xi_{1}\sim\mu_{1}$ and $\xi_{2}\sim\mu_{2}$. 

Another commonly used metric for measuring the distance of a Markov
chain from its equilibrium distribution is the total variation metric,
defined for two probability measures $\mu_{1}$ and $\mu_{2}$ on
the state space $\Omega$ by 
\[
d_{TV}\left(\mu_{1},\mu_{2}\right):=sup\left|\mu_{1}\left(A\right)-\mu_{2}\left(A\right)\right|
\]
where the supremum is taken over all measurable $A\subseteq\Omega$. 

The underlying metric on the state space used throughout \cite{key-1}
(and hence used implicitly in the statement of Theorem \ref{thm:0})
is defined by $d\left(x,y\right):=\sum_{i}n_{i}\left|x_{i}-z_{i}\right|$.
This is a non-standard choice for a metric on $\left[0,1\right]^{N}$,
however it is comparable to the more usual $l_{1}$ taxicab metric
$\hat{d}\left(x,y\right):=\sum_{i}\left|x_{i}-z_{i}\right|$ since
\[
n_{min}\cdot\hat{d}\left(x,y\right)\leq d\left(x,y\right)\leq n_{max}\cdot\hat{d}\left(x,y\right)
\]
where $n_{max}:=max_{i}\left\{ n_{i}\right\} $ and $n_{min}:=min_{i}\left\{ n_{i}\right\} $.
Hence, for two probability measures $\mu_{1}$ and $\mu_{2}$ on $\left[0,1\right]^{N}$,
it follows immediately that 
\[
n_{min}\cdot d_{\hat{W}}\left(\mu_{1},\mu_{2}\right)\leq d_{W}\left(\mu_{1},\mu_{2}\right)\leq n_{max}\cdot d_{\hat{W}}\left(\mu_{1},\mu_{2}\right)
\]
where $d_{\hat{W}}$ and $d_{W}$ are the Wasserstein metrics associated
with $\hat{d}$ and $d$ respectively. 

If $\Theta_{1}$ and $\Theta_{2}$ are two random variables on the
same state space with probability measures $m_{1}$ and $m_{2}$ respectively,
then we shall write 
\[
d_{W}\left(\Theta_{1},\Theta_{2}\right):=d_{W}\left(m_{1},m_{2}\right)\quad\mathrm{and}\quad d_{TV}\left(\Theta_{1},\Theta_{2}\right):=d_{TV}\left(m_{1},m_{2}\right)
\]

\medskip{}

Gibbs \cite{key-1} shows that 
\begin{thm}
\cite{key-1}\label{thm:0} Let $X^{t}$ be a copy of the Markov chain
evolving according to the Gibbs sampler, and let $Z^{t}$ be a chain
in equilibrium, distributed according to $\pi_{posterior}$. Then
if $\left[0,1\right]^{N}$ is given the metric $d\left(x,y\right):=\sum_{i}n_{i}\left|x_{i}-z_{i}\right|$,
it follows that $d_{W}\left(X^{t},Z^{t}\right)\leq\epsilon$ whenever
\begin{equation}
t>\vartheta\left(\epsilon\right):=\frac{log\left(\frac{\epsilon}{n_{max}N}\right)}{log\left(1-N^{-1}\left(1+n_{max}\gamma^{2}\sigma^{2}\right)^{-1}\right)}\label{eq:Wmixtime}
\end{equation}

\end{thm}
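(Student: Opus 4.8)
The plan is to construct an explicit Markovian coupling $(X^{t},Z^{t})$ of the Gibbs sampler started from an arbitrary point with a copy started in equilibrium, and to show that the expected $d$-distance between the two chains contracts by a fixed factor at every iteration. At each step I would have both chains select the \emph{same} site $i$ (chosen uniformly), and then draw the new common-site values from the optimal (quantile) coupling of the two full conditional densities (\ref{eq:pifc}). Since $d_{W}(X^{t},Z^{t})\le\mathbb{E}\,d(X^{t},Z^{t})$ for any coupling, a one-step contraction of the form $\mathbb{E}[d(X^{t+1},Z^{t+1})\mid X^{t},Z^{t}]\le\rho\,d(X^{t},Z^{t})$ with $\rho=1-N^{-1}(1+n_{max}\gamma^{2}\sigma^{2})^{-1}$ would, after iterating and bounding the initial distance, yield the theorem.

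The heart of the argument is a one-site contraction estimate. When site $i$ is updated, the two conditional laws are truncated normals on $[0,1]$ with the \emph{same} variance $\tau_{i}^{2}:=(\sigma^{-2}+n_{i}\gamma^{2})^{-1}$ and location means $\mu_{i}(X)$, $\mu_{i}(Z)$ differing only through the neighbouring values. I would show that for truncated normals of common variance the map sending the location parameter $m$ to the mean $\mathcal{M}(m)$ of the truncated law is a strict contraction: a direct differentiation gives $\mathcal{M}'(m)=\mathrm{Var}(U)/\tau_{i}^{2}$, where $U$ is distributed according to the truncated law, and since truncation to $[0,1]$ strictly reduces variance, $0<\mathcal{M}'(m)<1$. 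Combined with the monotone-likelihood-ratio (hence stochastic) monotonicity of the normal location family and the identity $W_{1}=\int_{0}^{1}|F_{1}-F_{2}|=\mathcal{M}(m_{2})-\mathcal{M}(m_{1})$ for stochastically ordered laws on $\mathbb{R}$, this shows the quantile coupling satisfies $\mathbb{E}\,|X_{i}'-Z_{i}'|\le|\mu_{i}(X)-\mu_{i}(Z)|\le\tau_{i}^{2}\gamma^{2}\sum_{j\sim i}|X_{j}-Z_{j}|$. I expect this Lipschitz-in-location property of the truncated mean to be the main obstacle, since it is the only place where the specific Gaussian form of (\ref{eq:pifc}) is used.

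Granting the one-site estimate, the drift computation is bookkeeping. Averaging over the uniformly chosen site and using that only coordinate $i$ changes, I would obtain
\begin{equation*}
\mathbb{E}[d(X^{t+1},Z^{t+1})\mid X^{t},Z^{t}]\le d-\frac{1}{N}\sum_{i}n_{i}|X_{i}-Z_{i}|+\frac{1}{N}\sum_{i}\frac{n_{i}\gamma^{2}}{\sigma^{-2}+n_{i}\gamma^{2}}\sum_{j\sim i}|X_{j}-Z_{j}|,
\end{equation*}
where $d=d(X^{t},Z^{t})$. Interchanging the order of summation in the last term and bounding the increasing function $n\mapsto n\gamma^{2}/(\sigma^{-2}+n\gamma^{2})=n\gamma^{2}\sigma^{2}/(1+n\gamma^{2}\sigma^{2})$ by its value at $n_{max}$, the inner sum over the $n_{j}$ neighbours of $j$ is at most $n_{j}\,n_{max}\gamma^{2}\sigma^{2}/(1+n_{max}\gamma^{2}\sigma^{2})=n_{j}\bigl(1-(1+n_{max}\gamma^{2}\sigma^{2})^{-1}\bigr)$. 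Combining this with the $-N^{-1}\sum_{i}n_{i}|X_{i}-Z_{i}|$ term cancels the $n_{j}$, leaving $-N^{-1}(1+n_{max}\gamma^{2}\sigma^{2})^{-1}\sum_{j}n_{j}|X_{j}-Z_{j}|$, so the whole bound collapses to $\rho\,d$ with $\rho=1-N^{-1}(1+n_{max}\gamma^{2}\sigma^{2})^{-1}$.

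Finally I would iterate the contraction to get $d_{W}(X^{t},Z^{t})\le\rho^{t}\,\mathbb{E}\,d(X^{0},Z^{0})$, bound the initial distance by $\mathbb{E}\,d(X^{0},Z^{0})=\sum_{i}n_{i}\mathbb{E}|X_{i}^{0}-Z_{i}^{0}|\le\sum_{i}n_{i}\le N n_{max}$ (as all coordinates lie in $[0,1]$), and solve $\rho^{t}N n_{max}\le\epsilon$ for $t$. Since $\log\rho<0$, dividing reverses the inequality and produces precisely the threshold $t>\vartheta(\epsilon)$ of (\ref{eq:Wmixtime}).
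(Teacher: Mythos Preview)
The paper does not prove this theorem; it is quoted from Gibbs~\cite{key-1} and used as a black box in the proof of Theorem~\ref{thm:1}. Your proposal is therefore a reconstruction of the cited result rather than a comparison target, and as such it is correct and follows the natural (and, as far as I know, Gibbs' own) route: a synchronous-site monotone coupling, a one-site Lipschitz bound for the truncated-normal mean in its location parameter, and the weighted-$\ell^{1}$ bookkeeping that produces the contraction factor $\rho=1-N^{-1}(1+n_{max}\gamma^{2}\sigma^{2})^{-1}$.

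Two small points worth tightening. First, the identity $\mathcal{M}'(m)=\mathrm{Var}(U)/\tau_{i}^{2}$ is exactly the exponential-family cumulant computation, but the inequality $\mathrm{Var}(U)\le\tau_{i}^{2}$ for a normal truncated to an interval, while true, is not entirely trivial and deserves a one-line justification or reference; this is the sole place where the argument could stall. Second, your use of $W_{1}=\mathcal{M}(m_{2})-\mathcal{M}(m_{1})$ is valid because the truncated-normal location family has monotone likelihood ratio and hence is stochastically ordered in $m$; you say this, but it is worth emphasising that the quantile coupling then gives $X_{i}'\le Z_{i}'$ a.s.\ (or vice versa), so $\mathbb{E}|X_{i}'-Z_{i}'|$ really equals the difference of means. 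With those two points made explicit, the proof is complete and matches the bound~(\ref{eq:Wmixtime}) exactly after iterating and using $d(X^{0},Z^{0})\le N n_{max}$.
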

By the comments preceding the statement of this theorem, (\ref{eq:Wmixtime})
remains true with the standard $l_{1}$ metric on the state space,
if we replace $\epsilon$ by $n_{min}\cdot\epsilon$ in the right-hand
side of this inequality.
\begin{rem*}
Equation (\ref{eq:Wmixtime}) appears in \cite{key-1} with the denominator
being\\
$log\left(N-1/N+n_{max}N^{-1}\gamma^{2}\left(\sigma^{-2}+n_{max}\gamma^{2}\right)^{-1}\right)$.
It is obvious from their proof that this is a typographical error,
and that the term $N-1/N$ was intended to be $\left(N-1\right)/N$.
\end{rem*}
It is not difficult to see that $d_{TV}$ is a special case of $d_{W}$
when the underlying metric is given by $d\left(x,z\right)=1$ if $x\neq z$.
In general however, convergence in $d_{W}$ does not imply convergence
in $d_{TV}$, and vice versa (see \cite{key-2} for examples where
convergence fails, as well as some conditions under which convergence
in one of $d_{W}$, $d_{TV}$ implies convergence in the other). The
purpose of this paper is to obtain a bound in $d_{TV}$ by making
use of (\ref{eq:Wmixtime}) and simple properties of the Markov chain,
without specifically engaging in a new study of the mixing time. 

Let $X_{t}$ be a copy of the Markov chain, and let $\mu^{t}$ be
its probability distribution. Furthermore, define $\zeta_{i}:=\left(\sigma^{-2}+n_{i}\gamma^{2}\right)^{-1}\left(\sigma^{-2}y_{i}+\gamma^{2}n_{max}\right)$,
$\zeta:=max\left\{ \left|\zeta_{i}\right|\right\} $ and $\tilde{\sigma_{i}}^{2}=\left(\sigma^{-2}+n_{i}\gamma^{2}\right)^{-1}$.
If $\pi$ is the posterior distribution with density function $\pi_{posterior}$,
we show that
\begin{thm}
\label{thm:1}Let $X_{t}$ be a copy of the Markov chain evolving
according to the Gibbs sampler, and let $Z^{t}$ be a chain in equilibrium.
Then $d_{TV}\left(X^{t},Z^{t}\right)\leq\epsilon$ whenever
\begin{equation}
t>\vartheta\left(\omega^{2}\right)+M\label{eq:tvmixtime}
\end{equation}
where $M=\left\lceil Nlog\left(N\right)+Nlog\left(\frac{2}{\epsilon}\right)\right\rceil $
and $\omega=\left[1-\left(1-\frac{\epsilon}{2}\right)^{M^{-1}}\right]/\left(1+e^{\frac{\left(\zeta+1\right)^{2}}{2\tilde{\sigma}^{2}}}\right)$
.
\end{thm}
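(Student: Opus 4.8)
The plan is to build an explicit coupling of $X^{t}$ with the stationary chain $Z^{t}$ and to invoke the coupling inequality $d_{TV}(X^{t},Z^{t})\le P(X^{t}\neq Z^{t})$, running the coupling in two phases. In the first phase, of length $\vartheta(\omega^{2})$, I would use the coupling that realizes the Wasserstein bound of Theorem~\ref{thm:0}: by (\ref{eq:Wmixtime}) there is a coupling with $\mathbb{E}[d(X,Z)]\le\omega^{2}$ at the end of this phase, so Markov's inequality gives $P(d(X,Z)\ge\omega)\le\omega$. On the complementary event the chains are close: since $d(x,z)=\sum_{j}n_{j}|x_{j}-z_{j}|$ and every $n_{j}\ge1$, the bound $d(X,Z)<\omega$ forces $\sum_{j\sim i}|X_{j}-Z_{j}|<\omega$ for each site $i$.

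Next I would establish a uniform minorization of the update kernel. Each full conditional (\ref{eq:pifc}) is a $Normal(\mu_{i},\tilde{\sigma_{i}}^{2})$ restricted to $[0,1]$, and its mean satisfies $|\mu_{i}|\le\zeta$ because $\sum_{j\sim i}x_{j}\in[0,n_{max}]$; hence for every $x\in[0,1]$ the density is at least $\delta:=e^{-(\zeta+1)^{2}/(2\tilde{\sigma}^{2})}$ (the numerator is $\ge e^{-(\zeta+1)^{2}/(2\tilde{\sigma}^{2})}$ while the normalizing integral is $\le1$). In the second phase, of length $M$, I would select the same site $i$ in both chains at each step and use the maximal coupling of the two restricted-normal conditionals. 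The probability that a step fails to set $X_{i}=Z_{i}$ is exactly the total variation distance between the two conditionals, which have common variance $\tilde{\sigma_{i}}^{2}$ and means differing by $\tilde{\sigma_{i}}^{2}\gamma^{2}|\sum_{j\sim i}(X_{j}-Z_{j})|<\omega$. The \emph{key estimate} is that this distance is at most $(1+\delta^{-1})\omega=p$, where $p:=1-(1-\epsilon/2)^{1/M}$ and the factor $1+\delta^{-1}=1+e^{(\zeta+1)^{2}/(2\tilde{\sigma}^{2})}$ comes from the density lower bound $\delta$ together with the bound $\le1$ on the two normalizing constants. Since a successful step makes the chains agree at site $i$ and leaves the other coordinates untouched, $d(X,Z)$ is non-increasing along successful steps; thus, conditionally on all earlier steps having succeeded, the next fails with probability at most $p$, and $P(\text{all }M\text{ steps succeed}\mid d<\omega)\ge(1-p)^{M}=1-\epsilon/2$.

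Finally I would add a coupon-collector argument: with $M=\lceil N\log N+N\log(2/\epsilon)\rceil$ one has $N(1-N^{-1})^{M}\le\epsilon/2$, so every site is selected at least once during the second phase with probability at least $1-\epsilon/2$. On the intersection of the three good events---the chains are close after phase one, every phase-two update succeeds, and every site is visited---each coordinate is last set by a successful update, whence $X^{t}=Z^{t}$. A union bound over the complementary events gives $P(X^{t}\neq Z^{t})\le\omega+\epsilon/2+\epsilon/2$, and since $\omega$ is of strictly smaller order than $\epsilon$ this yields $d_{TV}(X^{t},Z^{t})\le\epsilon$ for $t>\vartheta(\omega^{2})+M$; running past this time only helps, since once $X=Z$ the chains evolve identically.

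The main obstacle is the per-step estimate of the second paragraph: controlling the total variation distance between two $[0,1]$-restricted normals of equal variance and controlled mean gap by $(1+\delta^{-1})$ times that gap. This must be carried out for the two \emph{differently} normalized truncated densities rather than for the untruncated ones, and it is precisely what produces the delicate constant $\delta$ and hence the form of $\omega$. A secondary point requiring care is the bookkeeping that closeness in the metric $d$ is preserved throughout the second phase, so that the per-step bound $p$ legitimately applies at each of the $M$ steps.
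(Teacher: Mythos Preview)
Your strategy---run the Wasserstein coupling of Theorem~\ref{thm:0} for $\vartheta(\omega^{2})$ steps, then attempt one-shot maximal coupling at each updated site for $M$ further steps, and close with a coupon-collector bound---is exactly the paper's proof, and you correctly locate the crux in the per-step total-variation estimate for two $[0,1]$-truncated normals with common variance and small mean gap.

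Two bookkeeping points deserve correction. First, the paper obtains the key estimate via a clean general lemma (its Lemma~\ref{lem:2}): if $W_{1},W_{2}$ are $U_{1},U_{2}$ conditioned to lie in a set $S$, then $d_{TV}(W_{1},W_{2})\le d_{TV}(U_{1},U_{2})/\min\bigl(\int_{S}f_{U_{1}},\int_{S}f_{U_{2}}\bigr)$. Combined with $d_{TV}\bigl(N(\mu_{1},\sigma^{2}),N(\mu_{2},\sigma^{2})\bigr)\le|\mu_{1}-\mu_{2}|/\sqrt{2\pi\sigma^{2}}$ and the mass lower bound $(2\pi\tilde\sigma_{i}^{2})^{-1/2}e^{-(\zeta+1)^{2}/(2\tilde\sigma^{2})}$, this gives the \emph{conditional} (on $d<\omega$) per-step failure probability at most $e^{(\zeta+1)^{2}/(2\tilde\sigma^{2})}\tilde\sigma_{i}^{2}\gamma^{2}\omega$: the constant is $\delta^{-1}$, not $1+\delta^{-1}$. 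Second, in the paper the extra ``$1+$'' in $(1+\delta^{-1})\omega=\tilde\epsilon$ arises not from the truncated-normal comparison but from folding the bad event $\{d(u^{\tau},v^{\tau})\ge\omega\}$ (probability $\le\omega$) into each step's failure probability; this makes $\tilde\epsilon$ an \emph{unconditional} per-step bound, and the final tally is then exactly $\epsilon/2+\bigl[1-(1-\tilde\epsilon)^{M}\bigr]=\epsilon$. Your separate union bound yields $\omega+\epsilon/2+\epsilon/2>\epsilon$; absorbing $\omega$ as the paper does removes the overshoot and the need to argue that $\omega$ is negligible.
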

Akin to the bound for the metric $d_{W}$, this bound is also $O\left(Nlog\frac{N}{\epsilon}\right)$.
A notable difference, however, is that in our bound there is a (quadratic)
dependence on $\zeta$ (and hence a quadratic dependence on $max\left\{ \left|y_{i}\right|\right\} $). 

Since this state space is bounded, it also easily follows (using previously
defined notation) that $d_{\hat{W}}\left(\mu_{1},\mu_{2}\right)\leq N\cdot d_{TV}\left(\mu_{1},\mu_{2}\right)$
and $d_{W}\left(\mu_{1},\mu_{2}\right)\leq n_{max}\cdot N\cdot d_{TV}\left(\mu_{1},\mu_{2}\right)$.
Therefore, Theorem \ref{thm:1} also implies a bound in $d_{W}$ as
well as $d_{\hat{W}}$. 

Section \ref{sec:2} will present the proof of Theorem \ref{thm:1},
and will conclude with a discussion of the proof strategy.

\medskip{}

\section{\label{sec:2}From $d_{W}$ to $d_{TV}$}

Let $t$ be some fixed time, and let $X^{s}$ and $\mathit{Z}^{s}$
($s=1,\ldots,t$) be two instances of the Markov chain, evolving as
defined in the lines preceding (\ref{eq:pifc}). The coupling method
\cite{key-3} allows us to bound total variation via the inequality
\[
d_{TV}\left(X^{t},Z^{t}\right)\leq\mathbb{P}\left[X^{t}\neq Z^{t}\right].
\]
Having uniformly selected $i$ from $\left\{ 1,\ldots,N\right\} $,
we couple the pixel $X_{i}^{t+1}$ with $Z_{i}^{t+1}$ as follows:
let $f_{i}$ and $g_{i}$ be the conditional density functions of
$X_{i}^{t+1}$ given $X^{t}$ and of $Z_{i}^{t+1}$given $Z^{t}$,
respectively. Choose a point $\left(a_{1},a_{2}\right)$ uniformly
from the area defined by $A_{X}=\left\{ \left(a,b\right)|f_{i}\left(a\right)>0,0\leq b\leq f_{i}\left(a\right)\right\} $
- i.e. the area under the graph of $f_{i}$, and set $X_{i}^{t+1}=a_{1}$.
If the point $\left(a_{1},a_{2}\right)$ is also in the set $A_{Z}=\left\{ \left(a,b\right)|g_{i}\left(a\right)>0,0\leq b\leq g_{i}\left(a\right)\right\} $,
then set $Z_{i}^{t+1}=X_{i}^{t+1}=a_{1}$. Otherwise $\left(a_{1},a_{2}\right)\in A_{x}\backslash A_{z}$,
and in this case choose a point $\left(b_{1},b_{2}\right)$ uniformly
from $A_{Z}\backslash A_{X}=\left\{ \left(a,b\right)\left|g_{i}\left(a\right)\geq b\geq f_{i}\left(a\right)\right.\right\} $
and set $Z_{i}^{t+1}=b_{1}$. Observe that $X^{s}$ and $Z^{s}$ ($s=0,\ldots,t+1$)
are indeed two faithful copies of the Markov chain.

In order to proceed, we will establish the following results. 
\begin{lem}
\label{lem:2}Let $U_{1}\sim Normal\left(\mu_{1},\sigma^{2}\right)$
and $U_{2}\sim Normal\left(\mu_{2},\sigma^{2}\right)$, and let $W_{1}$
and $W_{2}$ have the distributions of $U_{1}$ and $U_{2}$ conditioned
to be in some measurable set $\mathit{S}$. Let $f_{U_{1}}$, $f_{U_{2}}$,
$f_{W_{1}}$ and $f_{W_{2}}$ be their respective density functions.
Then
\[
d_{TV}\left(W_{1},W_{2}\right)\leq\frac{d_{TV}\left(U_{1},U_{2}\right)}{min\left(\intop_{S}f_{U_{1}},\intop_{S}f_{U_{2}}\right)}
\]
\end{lem}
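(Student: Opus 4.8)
The plan is to avoid any Gaussian-specific computation and instead exploit the general overlap representation of total variation, namely $d_{TV}\left(\mu,\nu\right)=1-\int\min\left(f,g\right)$ for probability densities $f,g$, which is equivalent to the supremum definition used in the paper. Write $p_{1}:=\int_{S}f_{U_{1}}$ and $p_{2}:=\int_{S}f_{U_{2}}$. Since both sides of the claimed inequality are symmetric under the swap $1\leftrightarrow2$ (and $\min$ is symmetric), I may assume without loss of generality that $p_{1}\le p_{2}$, so that $\min\left(p_{1},p_{2}\right)=p_{1}$. Because $f_{W_{j}}=f_{U_{j}}\mathbf{1}_{S}/p_{j}$ and both conditioned densities are supported on $S$, the starting point is $d_{TV}\left(W_{1},W_{2}\right)=1-\int_{S}\min\left(f_{U_{1}}/p_{1},\,f_{U_{2}}/p_{2}\right)$.

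First I would establish the pointwise inequality, valid on $S$, that $\min\left(f_{U_{1}}/p_{1},\,f_{U_{2}}/p_{2}\right)\ge\frac{1}{p_{2}}\min\left(f_{U_{1}},f_{U_{2}}\right)$. This holds because $p_{1}\le p_{2}$ forces $f_{U_{1}}/p_{1}\ge f_{U_{1}}/p_{2}$, and replacing the first argument of a minimum by a smaller one can only decrease the minimum; comparing with $\min\left(f_{U_{1}}/p_{2},\,f_{U_{2}}/p_{2}\right)=\frac{1}{p_{2}}\min\left(f_{U_{1}},f_{U_{2}}\right)$ then gives the claim. Integrating over $S$ yields $d_{TV}\left(W_{1},W_{2}\right)\le1-\frac{1}{p_{2}}\int_{S}\min\left(f_{U_{1}},f_{U_{2}}\right)$. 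Next I would rewrite the surviving integral using $\min\left(f_{U_{1}},f_{U_{2}}\right)=f_{U_{2}}-\left(f_{U_{2}}-f_{U_{1}}\right)^{+}$, so that $\int_{S}\min\left(f_{U_{1}},f_{U_{2}}\right)=p_{2}-\int_{S}\left(f_{U_{2}}-f_{U_{1}}\right)^{+}$; substituting produces the clean intermediate bound $d_{TV}\left(W_{1},W_{2}\right)\le\frac{1}{p_{2}}\int_{S}\left(f_{U_{2}}-f_{U_{1}}\right)^{+}$.

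To finish, I would drop the restriction to $S$ in the last integral via $\int_{S}\left(f_{U_{2}}-f_{U_{1}}\right)^{+}\le\int_{\mathbb{R}}\left(f_{U_{2}}-f_{U_{1}}\right)^{+}=d_{TV}\left(U_{1},U_{2}\right)$, and then weaken $1/p_{2}$ to $1/p_{1}$ using $p_{2}\ge p_{1}=\min\left(p_{1},p_{2}\right)$, arriving at $d_{TV}\left(W_{1},W_{2}\right)\le d_{TV}\left(U_{1},U_{2}\right)/p_{2}\le d_{TV}\left(U_{1},U_{2}\right)/\min\left(p_{1},p_{2}\right)$. The step to get right — and where a careless argument fails — is the one producing the single factor $1/p_{2}$ rather than a factor of $2$: splitting $f_{U_{1}}/p_{1}-f_{U_{2}}/p_{2}$ by adding and subtracting $f_{U_{2}}/p_{1}$ and applying the triangle inequality leaves two comparable error terms and costs a spurious factor of $2$, so it is essential to work directly with the $\min$ (overlap) representation. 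I note finally that neither the normality of $U_{1},U_{2}$ nor the equality of their variances is actually needed for this estimate; the bound holds for arbitrary laws conditioned on $S$, and these hypotheses presumably enter only when the lemma is applied to the Gaussian full-conditionals of the Gibbs sampler.
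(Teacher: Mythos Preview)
Your proof is correct and follows essentially the same strategy as the paper: assume without loss of generality an ordering of the normalizing constants $p_j=\int_S f_{U_j}$, replace one factor $1/p_j$ by the other so as to pull out a common denominator, and then dominate the remaining integral by the unconditioned positive part $\int(f_{U_2}-f_{U_1})^{+}=d_{TV}(U_1,U_2)$. The only real difference is the representation of $d_{TV}$ used as the starting point: the paper begins from $d_{TV}(W_1,W_2)=\int_{\{f_{W_1}\ge f_{W_2}\}}(f_{W_1}-f_{W_2})$ and invokes the inclusion $\{f_{W_1}\ge f_{W_2}\}\subseteq\{f_{U_1}\ge f_{U_2}\}$, whereas you begin from the overlap formula $d_{TV}=1-\int\min(f_{W_1},f_{W_2})$ and use the identity $\min(a,b)=b-(b-a)^{+}$ to arrive at the same intermediate expression. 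Your route incidentally produces the slightly sharper bound $d_{TV}(W_1,W_2)\le d_{TV}(U_1,U_2)/\max(p_1,p_2)$ before you weaken it to match the lemma as stated, and your closing remark that normality is irrelevant is correct --- the paper's argument does not use it either.
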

\begin{proof}
We start by noting that 

\begin{eqnarray}
d_{TV}\left(W_{1},W_{2}\right) & = & \int_{f_{W_{1}}\geq f_{W_{2}}}\left(f_{W_{1}}-f_{W_{2}}\right)\label{eq:lem1ineq1}\\
 & = & \int_{f_{W_{1}}\geq f_{W_{2}}}\left(\frac{f_{U_{1}}}{\intop_{S}f_{U_{1}}}-\frac{f_{U_{2}}}{\intop_{S}f_{U_{2}}}\right)\nonumber 
\end{eqnarray}
The first equality is one of a few different equivalent definitions
of total variation. A proof is given in Proposition 3 of \cite{key-4}. 

Now if $\intop_{S}f_{U_{1}}\geq\intop_{S}f_{U_{2}}$, then the above
is bounded by
\begin{eqnarray}
d_{TV}\left(W_{1},W_{2}\right) & \leq & \frac{1}{\intop_{S}f_{U_{2}}}\int_{f_{W_{1}}\geq f_{W_{2}}}\left(f_{U_{1}-}f_{U_{2}}\right)\label{eq:lem1ineq2}\\
 & \leq & \frac{1}{\intop_{S}f_{U_{2}}}\int_{f_{U_{1}}\geq f_{U_{2}}}\left(f_{U_{1}-}f_{U_{2}}\right)\nonumber \\
 & = & \frac{d_{TV}\left(U_{1},U_{2}\right)}{min\left(\intop_{S}f_{U_{1}},\intop_{S}f_{U_{2}}\right)}\nonumber 
\end{eqnarray}
The second inequality follows from the observation that
\[
\frac{f_{U_{1}}\left(w\right)}{\intop_{S}f_{U_{1}}}\geq\frac{f_{U_{2}}\left(w\right)}{\intop_{S}f_{U_{2}}}\Rightarrow\frac{f_{U_{1}}\left(w\right)}{\intop_{S}f_{U_{2}}}\geq\frac{f_{U_{2}}\left(w\right)}{\intop_{S}f_{U_{2}}}\Rightarrow f_{U_{1}}\left(w\right)\geq f_{U_{2}}\left(w\right)
\]
Similarly, if $\intop_{S}f_{U_{2}}\geq\intop_{S}f_{U_{1}}$, then
we repeat the same argument with 
\[
d_{TV}\left(W_{1},W_{2}\right)=\int_{f_{W_{2}}\geq f_{W_{1}}}\left(f_{W_{2}}-f_{W_{1}}\right)
\]
in place of (\ref{eq:lem1ineq1}), arriving at the same result.
\end{proof}
A simple but useful result is the following lemma: 
\begin{lem}
\label{lem:3}$\left(2\pi\sigma^{2}\right)^{-1/2}\int_{0}^{1}e^{\frac{-\left(x-\zeta_{i}\right){}^{2}}{2\sigma^{2}}}\geq\left(2\pi\sigma^{2}\right)^{-1/2}e^{-\frac{\left(\left|\zeta_{i}\right|+1\right)^{2}}{2\sigma^{2}}}$\end{lem}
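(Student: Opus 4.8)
The plan is to bound the integrand pointwise from below on the interval of integration and then exploit the fact that $\left[0,1\right]$ has unit length. Since the common prefactor $\left(2\pi\sigma^{2}\right)^{-1/2}$ appears on both sides and is positive, it cancels, and it suffices to establish $\int_{0}^{1}e^{-\left(x-\zeta_{i}\right)^{2}/\left(2\sigma^{2}\right)}dx\geq e^{-\left(\left|\zeta_{i}\right|+1\right)^{2}/\left(2\sigma^{2}\right)}$.

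First I would observe that the Gaussian kernel $e^{-\left(x-\zeta_{i}\right)^{2}/\left(2\sigma^{2}\right)}$ is a decreasing function of $\left(x-\zeta_{i}\right)^{2}$, so a uniform lower bound on the integrand over $\left[0,1\right]$ follows from a uniform upper bound on $\left|x-\zeta_{i}\right|$ there. By the triangle inequality, for every $x\in\left[0,1\right]$ we have $\left|x-\zeta_{i}\right|\leq\left|x\right|+\left|\zeta_{i}\right|\leq1+\left|\zeta_{i}\right|$, since $x\in\left[0,1\right]$ forces $\left|x\right|\leq1$. Consequently $\left(x-\zeta_{i}\right)^{2}\leq\left(\left|\zeta_{i}\right|+1\right)^{2}$ for all such $x$.

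Next I would exponentiate to obtain $e^{-\left(x-\zeta_{i}\right)^{2}/\left(2\sigma^{2}\right)}\geq e^{-\left(\left|\zeta_{i}\right|+1\right)^{2}/\left(2\sigma^{2}\right)}$ uniformly on $\left[0,1\right]$. Integrating this inequality over $\left[0,1\right]$, an interval of length $1$, gives the desired integral bound, and multiplying through again by $\left(2\pi\sigma^{2}\right)^{-1/2}$ recovers the statement of the lemma verbatim.

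There is no genuine obstacle here; the only point meriting a moment's thought is the choice of $\left|\zeta_{i}\right|+1$ as the upper bound for $\left|x-\zeta_{i}\right|$. This estimate is deliberately loose -- the sharp bound is $\max\left\{ \left|\zeta_{i}\right|,\left|1-\zeta_{i}\right|\right\}$ -- but it has the clean closed form that feeds directly into the definition of $\omega$ in Theorem \ref{thm:1}, so I would keep it rather than optimize.
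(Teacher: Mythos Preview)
Your proof is correct and follows exactly the same idea as the paper's own argument: both rest on the single observation that $\left|x-\zeta_{i}\right|\leq\left|\zeta_{i}\right|+1$ for all $x\in\left[0,1\right]$, which bounds the integrand below by the constant on the right-hand side, and integration over the unit interval finishes the job. The paper simply labels this ``trivial'' and states the key inequality, whereas you have spelled out the triangle-inequality step and the monotonicity of the Gaussian kernel explicitly.
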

\begin{proof}
This is trivial, since $\left(\left|\zeta_{i}\right|+1\right)\geq\left|x-\zeta_{i}\right|$
for any $x\in\left[0,1\right]$.
\end{proof}
Now let $U_{1}\sim Normal\left(\left(\sigma^{-2}+n_{i}\gamma^{2}\right)^{-1}\left(\sigma^{-2}y_{i}+\gamma^{2}\sum_{j\sim i}x_{j}^{t}\right),\tilde{\sigma_{i}}^{2}\right)$
and\\
$U_{2}\sim Normal\left(\left(\sigma^{-2}+n_{i}\gamma^{2}\right)^{-1}\left(\sigma^{-2}y_{i}+\gamma^{2}\sum_{j\sim i}z_{j}^{t}\right),\tilde{\sigma_{i}}^{2}\right)$.
Applying Lemma \ref{lem:2} to $\left(X_{i}^{t+1},Z_{i}^{t+1}\right)$
with $S=\left[0,1\right]$, we see that conditional on $\mathscr{F}_{t}$
(sigma algebra generated by $X^{t}$ and $Z^{t}$) 
\begin{eqnarray}
\mathbb{P}\left[X_{i}^{t+1}\neq Z_{i}^{t+1}\left|\mathscr{F}_{t}\right.\right] & = & d_{TV}\left(X_{i}^{t+1},Z_{i}^{t+1}\left|\mathscr{F}_{t}\right.\right)\nonumber \\
 & \leq & \frac{d_{TV}\left(U_{1},U_{2}\left|\mathscr{F}_{t}\right.\right)}{min\left(\intop_{S}f_{U_{1}},\intop_{S}f_{U_{2}}\right)}\nonumber \\
 & \leq & \left(2\pi\tilde{\sigma_{i}}^{2}\right)^{1/2}e^{\frac{\left(\left|\zeta_{i}\right|+1\right)^{2}}{2\tilde{\sigma_{i}}^{2}}}d_{TV}\left(U_{1},U_{2}\left|\mathscr{F}_{t}\right.\right)\label{eq:dtvxz}
\end{eqnarray}
For the second inequality we have used Lemma \ref{lem:3}. By Lemma
15 of \cite{key-2} it follows that 
\begin{equation}
d_{TV}\left(U_{1},U_{2}\left|\mathscr{F}_{t}\right.\right)\leq\frac{\left|\mathbb{E}\left[U_{1}\left|\mathscr{F}_{t}\right.\right]-\mathbb{E}\left[U_{2}\left|\mathscr{F}_{t}\right.\right]\right|}{\sqrt{2\pi\tilde{\sigma_{i}}^{2}}}\label{eq:u1u2}
\end{equation}

\medskip{}
Hence by (\ref{eq:dtvxz})
\begin{eqnarray}
\mathbb{P}\left[X_{i}^{t+1}\neq Z_{i}^{t+1}\left|\mathscr{F}_{t}\right.\right] & \leq & e^{\frac{\left(\left|\zeta_{i}\right|+1\right)^{2}}{2\tilde{\sigma_{i}}^{2}}}\left|\mathbb{E}\left[U_{1}\left|\mathscr{F}_{t}\right.\right]-\mathbb{E}\left[U_{2}\left|\mathscr{F}_{t}\right.\right]\right|\nonumber \\
 & = & e^{\frac{\left(\left|\zeta_{i}\right|+1\right)^{2}}{2\tilde{\sigma_{i}}^{2}}}\tilde{\sigma_{i}}^{2}\gamma^{2}\left|\sum_{j\sim i}X_{j}^{t}-\sum_{j\sim i}Z_{j}^{t}\right|\nonumber \\
 & \leq & e^{\frac{\left(\left|\zeta_{i}\right|+1\right)^{2}}{2\tilde{\sigma_{i}}^{2}}}\tilde{\sigma_{i}}^{2}\gamma^{2}\sum_{j\sim i}\left|X_{j}^{t}-Z_{j}^{t}\right|\label{eq:dtvxz2}
\end{eqnarray}
We can now proceed with the proof of Theorem \ref{thm:1}.
\begin{proof}[Proof of Theorem \ref{thm:1}]
 Let $\epsilon>0$ be given, and define $\tilde{\epsilon}:=1-\left(1-\frac{\epsilon}{2}\right)^{M^{-1}}$
(recall that $M=\left\lceil Nlog\left(N\right)+Nlog\left(\frac{2}{\epsilon}\right)\right\rceil $)
and $\omega:=\tilde{\epsilon}/\left(1+e^{\frac{\left(\zeta+1\right)^{2}}{2\tilde{\sigma}^{2}}}\right)$
with $\tilde{\sigma}:=min\left\{ \tilde{\sigma}_{i}\right\} $. By
Theorem \ref{thm:0}, $d_{W}\left(X^{t},Z^{t}\right)\leq\omega^{2}$
whenever $t\geq\tau:=\left\lceil log\left(\frac{\omega^{2}}{n_{max}N}\right)/log\left(1-N^{-1}\left(1+\sigma^{2}n_{max}\gamma^{2}\right)^{-1}\right)\right\rceil $.
Since the infimum in the definition of $d_{W}$ is achieved (see for
example Section 5.1 of \cite{key-6}), we can find a joint distribution
$\mathcal{L}\left(u^{\tau},v^{\tau}\right)$ of two random variables
$u^{\tau}\sim X^{\tau}$ and $v^{\tau}\sim Z^{\tau}$, such that $\mathbb{E}\left[d\left(u^{\tau},v^{\tau}\right)\right]=\mathbb{E}\left[\sum n_{i}\left|u_{i}^{\tau}-v_{i}^{\tau}\right|\right]\leq\omega^{2}$
(we use the superscript $\tau$ in $u^{\tau}$ and $v^{\tau}$ to
preserve notational consistency with $X^{\tau}$ and $Z^{\tau}$).
And by Markov's inequality we get
\begin{eqnarray}
\mathbb{P}\left[\sum_{k\sim j}\left|u_{k}^{\tau}-v_{k}^{\tau}\right|\geq\omega\: for\: some\: j\right] & \leq & \mathbb{P}\left[d\left(u^{\tau},v^{\tau}\right)\geq\omega\right]\nonumber \\
 & \leq & \omega\label{eq:xkzkomega}
\end{eqnarray}

\noindent For $s=1,\ldots$ , define the Markov chains $u^{\tau+s}\sim X^{\tau+s}$
and $v^{\tau+s}\sim Z^{\tau+s}$ by uniformly choosing (for every
$s$) a site $i$ and assigning values to $\left(u_{i}^{\tau+s},v_{i}^{\tau+s}\right)$
as described at the beginning of Section \ref{sec:2}. Note that $d_{TV}\left(u^{\tau+s},v^{\tau+s}\right)=d_{TV}\left(X^{\tau+s},Z^{\tau+s}\right)$,
hence it suffices to show that $d_{TV}\left(u^{\tau+s},v^{\tau+s}\right)\leq\epsilon$
whenever $\vartheta\left(\omega^{2}\right)+M$. By splitting up the
above probability and applying (\ref{eq:dtvxz2}) and (\ref{eq:xkzkomega}),
we conclude that at the chosen site $i$
\begin{eqnarray}
\mathbb{P}\left[u_{i}^{\tau+1}\neq v_{i}^{\tau+1}\right] & = & \mathbb{P}\left[u_{i}^{\tau+1}\neq v_{i}^{\tau+1}\left|\sum_{k\sim i}\left|u_{k}^{\tau}-v_{k}^{\tau}\right|<\omega\right.\right]\cdot\mathbb{P}\left[\sum_{k\sim i}\left|u_{k}^{\tau}-v_{k}^{\tau}\right|<\omega\right]\nonumber \\
 &  & +\mathbb{P}\left[u_{i}^{\tau+1}\neq v_{i}^{\tau+1}\left|\sum_{k\sim i}\left|u_{k}^{\tau}-v_{k}^{\tau}\right|\geq\omega\right.\right]\cdot\mathbb{P}\left[\sum_{k\sim i}\left|u_{k}^{\tau}-v_{k}^{\tau}\right|\geq\omega\right]\nonumber \\
 & \leq & e^{\frac{\left(\left|\zeta_{i}\right|+1\right)^{2}}{2\tilde{\sigma_{i}}^{2}}}\tilde{\sigma_{i}}^{2}\gamma^{2}\omega+\omega\nonumber \\
 & \leq & \omega\left(e^{\frac{\left(\zeta+1\right)^{2}}{2\tilde{\sigma}^{2}}}+1\right)\nonumber \\
 & = & \tilde{\epsilon}\label{eq:epstilde}
\end{eqnarray}
Let $i_{m}$ be the pixel chosen at time $\tau+m$ for $m=1,2,\ldots$
. For $j\geq1$, define the events \textbf{$B_{j}:=\left\{ u_{i_{j}}^{\tau+j}=v_{i_{j}}^{\tau+j}\right\} $}
and \textbf{$B_{0}:=\left\{ d\left(u^{\tau},v^{\tau}\right)\leq\omega\right\} $},
and observe that in the event $\left\{ \bigcap_{k=0}^{j}B_{k}\right\} $,
we have $d\left(u^{\tau+j},v^{\tau+j}\right)\leq d\left(u^{\tau},v^{\tau}\right)\leq\omega$.
Therefore by equations (\ref{eq:dtvxz2}) and (\ref{eq:xkzkomega})
\begin{eqnarray*}
\mathbb{P}\left[u_{i_{m}}^{\tau+m}\neq v_{i_{m}}^{\tau+m}\left|\bigcap_{k=1}^{m-1}B_{k}\right.\right] & \leq & \mathbb{P}\left[u_{i_{m}}^{\tau+m}\neq v_{i_{m}}^{\tau+m}\left|\bigcap_{k=0}^{m-1}B_{k}\right.\right]\mathbb{P}\left[B_{0}\right]+\omega\\
 & \leq & \omega\left(e^{\frac{\left(\zeta+1\right)^{2}}{2\tilde{\sigma}^{2}}}+1\right)\\
 & = & \tilde{\epsilon}
\end{eqnarray*}
By induction on $m$ we get that 
\begin{eqnarray}
\mathbb{P}\left[\bigcap_{j=1}^{m}B_{j}\right] & \geq & \mathbb{P}\left[B_{m}\left|\bigcap_{j=1}^{m-1}B_{j}\right.\right]\cdot\mathbb{P}\left[\bigcap_{j=1}^{m-1}B_{j}\right]\label{eq:nomiss}\\
 & \geq & \left(1-\tilde{\epsilon}\right)^{m}\nonumber 
\end{eqnarray}
Note that the case $m=1$ follows directly from (\ref{eq:epstilde}).
We will now refer to the 'coupon collector' problem, discussed in
section 2.2 of \cite{key-5}: if $\theta$ is the first time when
a coupon collector has obtained all $N$ out of $N$ coupons, then 

\begin{equation}
\mathbb{P}\left[\theta>M\right]\leq\frac{\epsilon}{2}\label{eq:coupon}
\end{equation}
Let $\phi:=\tau+M$ and let $\theta:=min\left\{ l\geq1:\,\left\{ 1,\ldots,N\right\} \subseteq\left\{ i_{1},\ldots,i_{l}\right\} \right\} $
- i.e. $\tau+\theta$ is the first time when every pixel site has
been chosen at least once after $\tau$. Recall also that $\tilde{\epsilon}:=1-\left(1-\frac{\epsilon}{2}\right)^{M^{-1}}$.
Then
\begin{eqnarray}
\mathbb{P}\left[u^{\phi}\neq v^{\phi}\right] & = & \mathbb{P}\left[u^{\phi}\neq v^{\phi}\left|\theta>M\right.\right]\cdot\mathbb{P}\left[\theta>M\right]+\mathbb{P}\left[u^{\phi}\neq v^{\phi},\theta\leq M\right]\nonumber \\
 & \leq & \mathbb{P}\left[\theta>M\right]+\mathbb{P}\left[u_{i_{j}}^{\tau+j}\neq v_{i_{j}}^{\tau+j}\: for\: some\:1\leq j\leq M\right]\nonumber \\
 & = & \mathbb{P}\left[\theta>M\right]+1-\mathbb{P}\left[\bigcap_{j=1}^{M}B_{j}\right]\label{eq:condnomiss}\\
 & \leq & \frac{\epsilon}{2}+1-\left(1-\tilde{\epsilon}\right)^{M}\nonumber \\
 & = & \frac{\epsilon}{2}+1-\left(\left(1-\frac{\epsilon}{2}\right)^{M^{-1}}\right)^{M}\nonumber \\
 & \leq & \epsilon\nonumber 
\end{eqnarray}
 This proves the statement of the theorem.\end{proof}
\begin{rem*}
The strategy here was to couple two copies of the Markov chain until
favourable conditions were met (i.e. until their Wasserstein distance
was sufficiently small), and then attempt to force coalescence in
``one shot'' at each co-ordinate. This method is described in \cite{key-4}
and \cite{key-7} in a more general context. 

The proof of Theorem \ref{thm:1} is quite specialized, as it involves
the use of specific properties related to this model. We showed that
coalescence between the two chains, one co-ordinate at a time and
without any ``misses'', would occur with high likelihood. One important
property required in order to bound $d_{TV}$ in terms of $d_{W}$,
was bounding the conditional total variation at every co-ordinate
(equivalent to the non-overlapping area under the conditional density
functions at each co-ordinate) in terms of the distance between the
two chains. Another, less stringent, requirement was for the distance
between the two chains not to increase if coalescence was successful
at any co-ordinate (presumably one could construct a metric where
this is not necessarily true). With these conditions satisfied, it
may be possible to apply the ideas of this paper (as well as those
presented in \cite{key-7} and \cite{key-2}) to convert Wasserstein
bounds into TV bounds in a variety of situations. 

\end{rem*}

\subsubsection*{Acknowledgement}

This research was supported in part by the NSERC Discovery Grant of
Neal Madras at York University.

\end{document}